\theoremstyle{plain}
\newtheorem{theorem}{Theorem}[section]  
\newtheorem{lemma}[theorem]{Lemma}  
\newtheorem{proposition}[theorem]{Proposition}
\theoremstyle{definition}
\newtheorem{example}[theorem]{Example}
\theoremstyle{remark}
\DeclareMathOperator{\co}{co}
\DeclareMathOperator{\diag}{diag}
\author{M.V. Dolgopolik\footnote{Institute for Problems in Mechanical Engineering of the Russian Academy of Sciences,
Saint Petersburg, Russia}}
\title{A note on the generalised Hessian of the least squares associated with systems of linear inequalities}
\begin{document}

\maketitle

\begin{abstract}
The goal of this note is to point out an erroneous formula for the generalised Hessian of the least squares associated
with a system of linear inequalities, that was given in the paper ``A finite Newton method for classification'' by O.L.
Mangasarian (Optim. Methods Softw. 17: 913--929, 2002) and reproduced multiple times in other publications. We also
provide sufficient contiditions for the validity of Mangasarian's formula and show that Slater's condition 
guarantees that some particular elements from the set defined by Mangasarian belong to the generalised Hessian of 
the corresponding function.
\end{abstract}

\section{Introduction}

The generalised Jacobian of a locally Lipschitz continuous function $F \colon \mathbb{R}^n \to \mathbb{R}^m$ is defined
as
\[
  \partial F(x) = \co\Big\{ A \in \mathbb{R}^{m \times n} \Bigm| 
  \exists \{ x_n \} \subset D_F \colon \lim_{n \to \infty} x_n = x,
  \: \lim_{n \to \infty} J F(x_n) = A \Big\},
\]
where $D_F$ is the set of points at which $F$ is differentiable and $JF(x_n)$ is the classical Jacobian of $F$ at $x_n$ 
(see \cite{Clarke} for more details). The generalised Jacobian is a nonempty compact convex set. In turn, the
generalised Hessian \cite{HiriartUrruty} of a continuously differentiable function 
$f \colon \mathbb{R}^n \to \mathbb{R}$ with locally Lipschitz continuous gradient is defined as the generalised Jacobian
of the gradient of $f$ (see \cite{HiriartUrruty}) and is denoted by $\partial^2 f(x)$. The generalised 
Hessian is a nonempty compact convex set of symmetric matrices.

In \cite{Mangasarian2002}, the generalised Hessian of the function $f(x) = 0.5 \| (Ax - b)_+ \|^2$ associated with the
system of linear inequalities
\begin{equation} \label{eq:LinearSyst}
  A x \le b
\end{equation}
with some $A \in \mathbb{R}^{m \times n}$ and $b \in \mathbb{R}^m$ was considered. Here $\| \cdot \|$ is the Euclidean
norm and $(x)_+$ is a vector with components $\max\{ 0, x_i \}$. In \cite[Lemma~4]{Mangasarian2002}, it was claimed
that 
\begin{equation} \label{eq:MangasarianFormula}
  \partial^2 f(x) = A^T \diag((Ax - b)_*) A \quad \forall x \in \mathbb{R}^n,
\end{equation}
where
\[
  [(y)_*]_i = \begin{cases}
    1, & \text{if } y_i > 0,
    \\
    [0, 1], & \text{if } y_i = 0,
    \\
    0, & \text{if } y_i < 0,
  \end{cases}
  \quad \forall y \in \mathbb{R}^m,
\]
without any assumptions on the matrix $A$. Formula \eqref{eq:MangasarianFormula} has been reproduced and used in
multiple publications (see, e.g. \cite{FungMangasarian,Mangasarian2004,Keerthi,Mangasarian2006,Tanveer,Balasundaram}).
In many other articles (see, e.g. \cite{Evtushenko2004,Evtushenko2005,Evtushenko2008,Yuan,HoLin}), the following
corollary to formula \eqref{eq:MangasarianFormula} was used without any assumptions on the matrix $A$:
\begin{equation} \label{eq:EvtushenkoFormula}
  A^T D_{\pm}(x) A \in \partial^2 f(x)	\quad \forall x \in \mathbb{R}^n,
\end{equation}
where $D_+(x)$ and $D_-(x)$ are diagonal matrices such that
\[
  D_+(x)_{ii} = \begin{cases}
    1, & \text{if } \langle A_i, x \rangle > b_i,
    \\
    0, & \text{if } \langle A_i, x \rangle \le b_i,
  \end{cases}
  \quad
  D_-(x)_{ii} = \begin{cases}
    1, & \text{if } \langle A_i, x \rangle \ge b_i,
    \\
    0, & \text{if } \langle A_i, x \rangle < b_i
  \end{cases}
\]
for all $i \in \{ 1, \ldots, m \}$.

The aim of this note is to show that neither equality \eqref{eq:MangasarianFormula} nor inclusion
\eqref{eq:EvtushenkoFormula} hold true in the general case and provide sufficient conditions for these formulas to be
valid.

\section{A counterexample}

Let us provide a simple counterexample to \eqref{eq:MangasarianFormula} and \eqref{eq:EvtushenkoFormula}. Let 
$n = m = 2$. Consider the following system of linear inequalities:
\[
  x_1 \le 0, \quad - x_1 \le 0.
\]
In this case $A = \left( \begin{smallmatrix} 1 & 0 \\ -1 & 0 \end{smallmatrix} \right)$, 
$b = \left( \begin{smallmatrix} 0 \\ 0 \end{smallmatrix} \right)$. Hence
\[
  f(x) = \frac{1}{2} \| (Ax - b)_+ \|^2 = \frac{1}{2} \big( \max\{ x_1, 0 \}^2 + \max\{ - x_1, 0 \} \big)^2 
  = \frac{1}{2} x_1^2,
\]
which implies that
\[
  \nabla f(x) = \begin{pmatrix} x_1 \\ 0 \end{pmatrix}, \quad 
  \nabla^2 f(x) = \begin{pmatrix} 1 & 0 \\ 0 & 0 \end{pmatrix} \quad \forall x \in \mathbb{R}^2,
\]
that is, the function $f$ is twice continuously differentiable. Therefore, in particular, 
$\partial^2 f(x) = \nabla^2 f(x)$ for all $x \in \mathbb{R}^2$.

One the other hand, for any $x \in \mathbb{R}^2$ such that $x_1 = 0$ one has
\begin{align*}
  A^T \diag((Ax - b)_*) A = \begin{pmatrix} 1 & -1 \\ 0 & 0 \end{pmatrix} 
  \begin{pmatrix} [0, 1] & 0 \\ 0 & [0, 1] \end{pmatrix}
  \begin{pmatrix} 1 & 0 \\ -1 & 0 \end{pmatrix} 
  &= \begin{pmatrix} [0, 2] & 0 \\ 0 & 0 \end{pmatrix} 
  \\ 
  &\ne \nabla^2 f(x)
\end{align*}
\begin{align*}
  &A^T D_+(x) A = \begin{pmatrix} 1 & -1 \\ 0 & 0 \end{pmatrix} \begin{pmatrix} 0 & 0 \\ 0 & 0 \end{pmatrix}
  \begin{pmatrix} 1 & 0 \\ -1 & 0 \end{pmatrix} = \begin{pmatrix} 0 & 0 \\ 0 & 0 \end{pmatrix} \ne \nabla^2 f(x),
  \\
  &A^T D_-(x) A = \begin{pmatrix} 1 & -1 \\ 0 & 0 \end{pmatrix} \begin{pmatrix} 1 & 0 \\ 0 & 1 \end{pmatrix}
  \begin{pmatrix} 1 & 0 \\ -1 & 0 \end{pmatrix} = \begin{pmatrix} 2 & 0 \\ 0 & 0 \end{pmatrix} \ne \nabla^2 f(x).
\end{align*}
Thus, formulas \eqref{eq:MangasarianFormula} and \eqref{eq:EvtushenkoFormula} do not hold true in the general case.

\section{Sufficient conditions}

Let us provide sufficient conditions for \eqref{eq:MangasarianFormula} and \eqref{eq:EvtushenkoFormula} to be valid.
Denote 
\begin{align*}
  I_0(x) &= \Big\{ i \in \{ 1, \ldots, m \} \Bigm| \langle A_i, x \rangle = b_i \Big\}, 
  \\
  I_+(x) &= \Big\{ i \in \{ 1, \ldots, m \} \Bigm| \langle A_i, x \rangle > b_i \Big\}, 
  \\
  I_-(x) &= \Big\{ i \in \{ 1, \ldots, m \} \Bigm| \langle A_i, x \rangle < b_i \Big\},
\end{align*}
where $A_i$ are the rows of the matrix $A$ written as column vectors, that is, $A^T = (A_1, \ldots, A_m)$.

As usual, we say that Slater's condition holds true, if there exists $\widehat{x} \in \mathbb{R}^n$ such that 
$A \widehat{x} < b$ (i.e. $\langle A_i, x \rangle < b_i$ for all $i \in \{ 1, \ldots, m \}$). As is well-known and easy
to check, Slater's conditions is equivalent to the assumption that the solution set of the system of linear inequalities
\eqref{eq:LinearSyst} has nonempty interior.

\begin{proposition} \label{prp:EvtushenkoFormula}
Let Slater's condition hold true. Then for any $x \in \mathbb{R}^n$ one has $A^T D_{\pm}(x) A \in \partial^2 f(x)$.
\end{proposition}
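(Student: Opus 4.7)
The plan is to show that both $A^T D_+(x) A$ and $A^T D_-(x) A$ arise as limits of classical Hessians $\nabla^2 f(x_k)$ along sequences $x_k \to x$ at which $f$ is twice differentiable, so that they lie in $\partial^2 f(x)$ directly from the definition (no convex hull needed).

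First I would note that $\nabla f(y) = A^T (Ay-b)_+$, and that this map is classically differentiable at every $y$ for which $I_0(y) = \emptyset$; at such a point its Jacobian is $A^T D(y) A$ with the diagonal $D(y)$ having entry $1$ for $i \in I_+(y)$ and entry $0$ for $i \in I_-(y)$. The task is then to perturb $x$ into the interior of one of the open cells of the arrangement $\{\langle A_i, \cdot\rangle = b_i\}$ while controlling which side of each hyperplane in $I_0(x)$ the perturbation lies on.

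This is exactly what Slater's point $\widehat{x}$ provides. For $A^T D_+(x) A$, consider the inward perturbation
\[
  x_k = \Big(1 - \tfrac{1}{k}\Big) x + \tfrac{1}{k}\widehat{x}.
\]
For $i \in I_0(x)$, $\langle A_i, x_k\rangle - b_i = (1/k)(\langle A_i, \widehat{x}\rangle - b_i) < 0$, so $i$ moves into $I_-(x_k)$; for $i \in I_+(x)$ and $i \in I_-(x)$, strictness is preserved for all large $k$ by continuity. Hence $I_0(x_k) = \emptyset$ and the Jacobian pattern at $x_k$ agrees exactly with $D_+(x)$, giving $\nabla^2 f(x_k) = A^T D_+(x) A$ for all large $k$ and $x_k \to x$.

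For $A^T D_-(x) A$ I would use the outward perturbation
\[
  x_k = \Big(1 + \tfrac{1}{k}\Big) x - \tfrac{1}{k}\widehat{x}.
\]
Now for $i \in I_0(x)$, $\langle A_i, x_k\rangle - b_i = (1/k)(b_i - \langle A_i, \widehat{x}\rangle) > 0$, so $i$ moves into $I_+(x_k)$; for $i \in I_+(x)$, $\langle A_i, x_k\rangle > \langle A_i, x\rangle > b_i$ automatically; and for $i \in I_-(x)$, strict inequality $\langle A_i, x_k\rangle < b_i$ persists for large $k$ by continuity. Thus $\nabla^2 f(x_k) = A^T D_-(x) A$ along this sequence, and the inclusion follows.

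The only delicate point is the outward perturbation for indices in $I_-(x)$: here $\langle A_i, x\rangle - \langle A_i, \widehat{x}\rangle$ may be positive, so one must argue that the margin $b_i - \langle A_i, x\rangle > 0$ dominates for $k$ sufficiently large. This is straightforward since $I_-(x)$ is finite, so a uniform bound on $k$ works. The rest is a routine verification that all three cases $I_+, I_-, I_0$ behave as claimed under each perturbation.
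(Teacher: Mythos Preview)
Your proof is correct and follows essentially the same approach as the paper: the paper also perturbs along the segment $[x,\widehat{x}]$, using $y_n = (1-t_n)x + t_n\widehat{x}$ to obtain $A^T D_+(x) A$ and $z_n = x + t_n(x-\widehat{x})$ to obtain $A^T D_-(x) A$, with the same case analysis on $I_+(x)$, $I_0(x)$, $I_-(x)$. The only cosmetic difference is that the paper separates out the trivial case $I_0(x)=\emptyset$, which your argument absorbs automatically.
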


\begin{proof}
If $I_0(x) = \emptyset$, then
\[
  f(y) = \frac{1}{2} \sum_{i \in I_+(x)} \big( \langle A_i, y \rangle - b_i \big)^2
\]
for any $y$ in a sufficiently small neighbourhood of $x$. Consequenlty, $f$ is twice continuously differentiable at $x$
and the claim of the proposition can be readily verified directly.

Suppose now that $I_0(x) \ne \emptyset$. Choose any sequence $\{ t_n \} \subset (0, + \infty)$ converging to zero and
denote
$y_n = (1 - t_n) x + t_n \widehat{x}$, where $\widehat{x}$ is from Slater's condition. Then 
\[
  \langle A_i, y_n \rangle - b_i 
  = (1 - t_n) (\langle A_i, x \rangle - b_i) + t_n (\langle A_i, \widehat{x} \rangle - b_i).
\]
Therefore, $\langle A_i, y_n \rangle < b_i$ for all $n \in \mathbb{N}$ and $i \in I_0(x) \cup I_-(x)$, while for any 
$i \in I_+(x)$ there exists $n_i \in \mathbb{N}$ such that $\langle A_i, y_n \rangle > b_i$ for any $n \ge n_i$. Hence
for any $n \ge n^* := \max_{i \in I_+(x)} n_i$ there exists a neighbourhood $\mathcal{U}(y_n)$ of $y_n$ such that
\[
  f(y) = \frac{1}{2} \sum_{i \in I_+(x)} \big( \langle A_i, y \rangle - b_i \Big)^2 
  \quad \forall y \in \mathcal{U}(y_n),
\]
which implies that $f$ is twice continuously differentiable at $y_n$ and $\nabla^2 f(y_n) = A^T D_+(x) A$. Passing to
the limit as $n \to \infty$ one obtains that $A^T D_+(x) A \in \partial^2 f(x)$.

Define $z_n = x + t_n(x - \widehat{x})$. Note that
\[
  \langle A_i, z_n \rangle - b_i = \big( \langle A_i, x \rangle - b_i \big) 
  + t_n \Big( \langle A_i, x \rangle - b_i - (\langle A_i, \widehat{x} \rangle - b_i ) \Big)
\]
Hence taking into account the facts that $\langle A_i, \widehat{x} \rangle - b_i < 0$ and the sequence $\{ t_n \}$
converges to zero one can conclude that $\langle A_i, z_n \rangle - b_i > 0$ for any $i \in I_0(x)$ and 
$n \in \mathbb{N}$, and there exists $n^*$ such that 
\[
  \langle A_i, z_n \rangle - b_i \begin{cases}
    > 0, & \text{if } i \in I_+(x), 
    \\
    < 0, & \text{if } i \in I_-(x)
  \end{cases}
  \quad \forall n \ge n^*.
\]
Therefore, for any $n \ge n^*$ there exists a neighbourhood $\mathcal{U}(z_n)$ of $z_n$ such that
\[
  f(y) = \frac{1}{2} \sum_{i \in I_+(x) \cup I_0(x)} \big( \langle A_i, y \rangle - b_i \big)^2 \quad \forall y
\in \mathcal{U}(z_n).
\]
Consequently, $f$ is twice continuously differentiable at $z_n$ for any $n \ge n^*$ and 
$\nabla^2 f(z_n) = A^T D_-(x) A$. Now, passing to the limit as $n \to \infty$ one can conclude that 
$A^T D_-(x) A \in \partial^2 f(x)$.
\end{proof}

As the following example demonstrates, Slater's conditions by itself is not sufficient for equality
\eqref{eq:MangasarianFormula} to hold true.

\begin{example}
Let $n = 2$ and $m = 3$. Consider the following system of linear inequalities:
\begin{equation} \label{eq:SlaterButNotMangasarian}
  x_1 \le 0, \quad x_2 \le 0, \quad x_1 + x_2 \le 0. 
\end{equation}
In this case
\[
  A = \left( \begin{smallmatrix} 1 & 0 \\ 0 & 1 \\ 1 & 1 \end{smallmatrix} \right), \quad b = 0.
\]
and Slater's condition holds true with $\widehat{x} = (-1, -1)^T$. Note that
\begin{align*}
  f(x) &= \frac{1}{2} \Big( \max\{ x_1, 0 \}^2 + \max\{ x_2, 0 \}^2 + \max\{ x_1 + x_2, 0 \}^2 \Big), 
  \\
  \nabla f(x) &= \begin{pmatrix} \max\{ x_1, 0 \} + \max\{ x_1 + x_2, 0 \} 
                  \\
                  \max\{ x_2, 0 \} + \max\{ x_1 + x_2, 0 \}
		\end{pmatrix}
\end{align*}
One can readily verify that $f$ is twice differentiable if and only if $x_1 \ne 0$, $x_2 \ne 0$, and $x_1 + x_2 \ne 0$.
Moreover, one has
\[
  \nabla^2 f(x) = \begin{cases}
    \mathbb{O}_{2 \times 2}, & \text{if } x_1 < 0 \text{ and } x_2 < 0
    \\
    \left( \begin{smallmatrix} 0 & 0 \\ 0 & 1 \end{smallmatrix} \right), 
    & \text{ if } x_1 < 0 \text{ and } x_2 > 0 \text{ and } x_1 + x_2 < 0
    \\
    \left( \begin{smallmatrix} 1 & 1 \\ 1 & 2 \end{smallmatrix} \right), 
  & \text{ if } x_1 < 0 \text{ and } x_2 > 0 \text{ and } x_1 + x_2 > 0
    \\
    \left( \begin{smallmatrix} 1 & 0 \\ 0 & 0 \end{smallmatrix} \right), 
  & \text{ if } x_1 > 0 \text{ and } x_2 < 0 \text{ and } x_1 + x_2 < 0
    \\
    \left( \begin{smallmatrix} 2 & 1 \\ 1 & 1 \end{smallmatrix} \right), 
  & \text{ if } x_1 > 0 \text{ and } x_2 < 0 \text{ and } x_1 + x_2 > 0
    \\
    \left( \begin{smallmatrix} 2 & 1 \\ 1 & 2 \end{smallmatrix} \right), 
  & \text{if } x_1 > 0  \text{ and } x_2 > 0,
  \end{cases}
\] 
where $\mathbb{O}_{2 \times 2}$ is the zero matrix of dimension $2 \times 2$. Therefore
\[
  \partial^2 f(0) = \co\Big\{ \mathbb{O}_{2 \times 2}, 
  \left( \begin{smallmatrix} 0 & 0 \\ 0 & 1 \end{smallmatrix} \right),
  \left( \begin{smallmatrix} 1 & 1 \\ 1 & 2 \end{smallmatrix} \right), 
  \left( \begin{smallmatrix} 1 & 0 \\ 0 & 0 \end{smallmatrix} \right),
  \left( \begin{smallmatrix} 2 & 1 \\ 1 & 1 \end{smallmatrix} \right), 
  \left( \begin{smallmatrix} 2 & 1 \\ 1 & 2 \end{smallmatrix} \right)
  \Big\}.
\]
On the other hand, note that for $x = 0$ one has $(Ax - b)_* = ([0, 1], [0, 1], [0, 1])^T$ and for 
$y = (0, 0, 1)^T \in (Ax - b)_*$ one has
\[
  A^T \diag(y) A = \begin{pmatrix} 1 & 0 & 1 \\ 0 & 1 & 1 \end{pmatrix} 
  \begin{pmatrix} 0 & 0 & 0 \\ 0 & 0 & 0 \\ 0 & 0 & 1 \end{pmatrix}
  \begin{pmatrix} 1 & 0 \\ 0 & 1 \\ 1 & 1 \end{pmatrix} = \begin{pmatrix} 1 & 1 \\ 1 & 1 \end{pmatrix}.
\]
However, $A^T \diag(y) A \notin \partial^2 f(0)$, since otherwise one could find $\alpha_i \ge 0$, 
$i \in \{ 1, \ldots, 6 \}$, such that $\alpha_1 + \ldots + \alpha_6 = 1$ and
\[
  \alpha_2 \begin{pmatrix} 0 & 0 \\ 0 & 1 \end{pmatrix}
  + \alpha_3 \begin{pmatrix} 1 & 1 \\ 1 & 2 \end{pmatrix}
  + \alpha_4 \begin{pmatrix} 1 & 0 \\ 0 & 0 \end{pmatrix}
  + \alpha_5 \begin{pmatrix} 2 & 1 \\ 1 & 1 \end{pmatrix}
  + \alpha_6 \begin{pmatrix} 2 & 1 \\ 1 & 2 \end{pmatrix}
  = \begin{pmatrix} 1 & 1 \\ 1 & 1 \end{pmatrix}
\]
or, equivalently,
\[
  \begin{cases}
    \alpha_3 + \alpha_4 + 2 \alpha_5 + 2 \alpha_6 = 1
    \\
    \alpha_3 + \alpha_5 + \alpha_6 = 1
    \\
    \alpha_2 + 2 \alpha_3 + \alpha_5 + 2 \alpha_6 = 1.
  \end{cases}    
\]
Subtracting the second equation from the first one one gets $\alpha_4 + \alpha_5 + \alpha_6 = 0$, which due to the
nonnegativity of $\alpha_i$ implies that $\alpha_4 = \alpha_5 = \alpha_6 = 0$. Hence with the use of the second equation
we get $\alpha_3 = 1$, and taking into account the third equation we obtain $\alpha_2 + 2 = 1$, which is impossible.
Thus, equality \eqref{eq:MangasarianFormula} does not hold true for the system of linear inequalities
\eqref{eq:SlaterButNotMangasarian}, despite the fact that it satisfies Slater's condition.
\end{example}

Note that in the previous example the vectors $A_i$, $i \in I_0(x)$, are linearly dependent. Our aim is to show that
equality \eqref{eq:MangasarianFormula} holds true, provided these vectors are linearly independent. First we show that
equality \eqref{eq:MangasarianFormula} is satisfied as inclusion ``$\subseteq$'' in the general case and then prove that
the opposite inclusion holds true under the linear independence assumption.

\begin{lemma} \label{lem:MangasarianIncl}
For any $x \in \mathbb{R}^n$ one has $\partial^2 f(x) \subseteq A^T \diag((Ax - b)_*) A$.
\end{lemma}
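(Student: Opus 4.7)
The plan is to unwind the definition of the generalised Hessian, identify the ``generic'' set of points at which $f$ is $C^2$, and track what the classical Hessian looks like on that set.

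First I would note that $\nabla f(y) = A^T (Ay - b)_+$ is globally Lipschitz continuous on $\mathbb{R}^n$, so $\partial^2 f(x)$ is well-defined. Let $D' = \{ y \in \mathbb{R}^n \mid \langle A_i, y\rangle \ne b_i \text{ for all } i \}$. This set is open and its complement is a finite union of hyperplanes (hence of measure zero). On $D'$, the function $f$ is locally a quadratic, so $f$ is $C^2$ on $D'$ and $\nabla^2 f(y) = A^T D(y) A$, where $D(y)$ is the diagonal $\{0,1\}$-matrix with $D(y)_{ii} = 1$ iff $\langle A_i, y\rangle > b_i$. By the standard fact that, for a locally Lipschitz map, the generalised Jacobian can be computed using any subset of $D_F$ whose complement has measure zero, it follows that $\partial^2 f(x)$ equals the convex hull of all limits $\lim_n \nabla^2 f(y_n)$ along sequences $y_n \to x$ with $y_n \in D'$.

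Next I would analyse such a limit. Given $y_n \to x$ with $y_n \in D'$, passing to a subsequence I may assume $D(y_n)$ is eventually constant, equal to some diagonal $\{0,1\}$-matrix $D^*$. For $i \in I_+(x)$ continuity of $y \mapsto \langle A_i, y\rangle - b_i$ forces $D^*_{ii} = 1$, for $i \in I_-(x)$ it forces $D^*_{ii} = 0$, and for $i \in I_0(x)$ we only know $D^*_{ii} \in \{0,1\} \subseteq [0,1]$. Hence $D^* = \diag(d^*)$ with $d^* \in (Ax - b)_*$, and $\lim_n \nabla^2 f(y_n) = A^T \diag(d^*) A \in A^T \diag((Ax-b)_*) A$.

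Finally, to conclude I would observe that the set $A^T \diag((Ax-b)_*) A$ is itself convex: $(Ax-b)_*$ is a product of the intervals $\{1\}$, $\{0\}$, and $[0,1]$ (hence a convex box in $\mathbb{R}^m$), and $d \mapsto A^T \diag(d) A$ is linear. Consequently every convex combination of matrices of the form $A^T \diag(d^*) A$ with $d^* \in (Ax-b)_*$ is again of the form $A^T \diag(d) A$ with $d \in (Ax-b)_*$, and the inclusion follows. The only mildly delicate point is the reduction to limits along sequences in $D'$; once that is in hand, the rest of the argument is a direct bookkeeping on the indices in $I_+(x)$, $I_-(x)$, $I_0(x)$.
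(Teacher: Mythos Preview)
Your argument is correct and takes a genuinely different route from the paper's proof. The paper proceeds component-wise: it writes the $j$-th partial derivative as $f'_{x_j}(x)=\sum_i a_{ij}\max\{\langle A_i,x\rangle-b_i,0\}$, invokes the row-wise inclusion $\partial F\subseteq(\partial F_1,\ldots,\partial F_n)^T$ for the generalised Jacobian, and then applies Clarke's calculus rules (sum rule and the subdifferential of $\max\{\cdot,0\}$) to bound $\partial f'_{x_j}(x)$; identifying the resulting expression with the $j$-th row of $A^T\diag((Ax-b)_*)A$ finishes the proof. Your approach, by contrast, works directly from the definition: you restrict attention to the generic open set $D'$ where no constraint is active (using Clarke's theorem that null sets may be discarded when computing limits of Jacobians), compute the classical Hessian there as $A^T D(y)A$ with a $\{0,1\}$-diagonal, and then control the possible limits via continuity of the active/inactive indices. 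Your method is arguably more transparent and yields the extra information that every limiting Hessian is of the form $A^T\diag(v)A$ with $v$ a $\{0,1\}$-vector compatible with $I_\pm(x)$, not merely in the convex set; the paper's approach, on the other hand, is more mechanical and would generalise more readily to situations where the generic set is harder to describe. One cosmetic point: your description of the complement of $D'$ as a finite union of hyperplanes tacitly assumes no row $A_i$ is zero with $b_i=0$; in that degenerate case the corresponding term contributes nothing to $f$ and to $A^T\diag(d)A$, so the row can simply be dropped.
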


\begin{proof}
Observe that $\nabla f(x) = (f'_{x_1}(x), \ldots, f'_{x_n}(x))^T$ with
\[
  f'_{x_j}(x) = \sum_{i = 1}^m a_{ij} \max\{ \langle A_i, x \rangle - b_i \rangle, 0 \},
\]
where $a_{ij}$ are the elements of the matrix $A$. As is easily seen, for any locally Lipschitz continuous function 
$F = (F_1, \ldots, F_n)^T$ one has 
\[
  \partial F(\cdot) \subseteq 
  \left( \begin{smallmatrix} \partial F_1(\cdot) \\ \vdots \\ \partial F_n(\cdot) \end{smallmatrix} \right).
\]
Therefore
\[
  \partial^2 f(\cdot) \subseteq  
  \left( \begin{smallmatrix} \partial f'_{x_1}(\cdot)^T \\ \vdots \\ \partial f'_{x_n}(\cdot)^T 
  \end{smallmatrix} \right).
\]
With the use of standard calculus rules for the Clarke subdifferential \cite{Clarke} one gets
\[
  \partial f'_{x_j}(x) \subseteq \sum_{i \in I_+(x)} a_{ij} A_i + \sum_{i \in I_0(x)} a_{ij} \co\{ 0, A_i \}.
\]
Hence taking into account the fact the transposed right-hand side of this inclusion is equal to the $j$-th row of the
matrix $A^T \diag((Ax - b)_*) A$ we arrive at the required result.
\end{proof}

Denote by $|I|$ the cardinality of a set $I$.

\begin{proposition}
Let $|I_0(x)| \le n$ and the vectors $A_i$, $i \in I_0(x)$, be linearly independent for some $x \in \mathbb{R}^n$. Then
$\partial^2 f(x) = A^T \diag((Ax - b)_*) A$.
\end{proposition}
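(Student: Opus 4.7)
Lemma \ref{lem:MangasarianIncl} already gives the inclusion $\partial^2 f(x) \subseteq A^T \diag((Ax-b)_*) A$, so I only have to prove the reverse inclusion. Since $\partial^2 f(x)$ is convex, it suffices to exhibit every vertex of the polytope $A^T \diag((Ax-b)_*) A$ as an element of $\partial^2 f(x)$. Observe that $A^T \diag((Ax-b)_*) A$ is the image, under the linear map $d \mapsto A^T \diag(d) A$, of the box $\prod_{i=1}^m [(Ax-b)_*]_i$, whose vertices are indexed by subsets $J \subseteq I_0(x)$: to such a $J$ corresponds the diagonal $d^J$ with $d^J_i = 1$ for $i \in I_+(x) \cup J$ and $d^J_i = 0$ otherwise. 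Since a linear image of a polytope is the convex hull of the images of its vertices, the task reduces to showing that $A^T \diag(d^J) A \in \partial^2 f(x)$ for every $J \subseteq I_0(x)$.

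Fix such a $J$. Imitating the strategy of the proof of Proposition~\ref{prp:EvtushenkoFormula}, I would produce a direction $v \in \mathbb{R}^n$ satisfying $\langle A_i, v \rangle = +1$ for $i \in J$ and $\langle A_i, v \rangle = -1$ for $i \in I_0(x)\setminus J$, and then set $y_n := x + t_n v$ for a sequence $t_n \downarrow 0$. For $i \in I_0(x)$ one has $\langle A_i, y_n \rangle - b_i = t_n \langle A_i, v \rangle$, which already has the desired sign; for $i \in I_+(x)\cup I_-(x)$ the quantity $\langle A_i, x \rangle - b_i$ is nonzero, so its sign is preserved at $y_n$ for all sufficiently large $n$ by continuity. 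Consequently, on a neighbourhood of $y_n$,
\[
f(y) = \frac{1}{2}\sum_{i \in I_+(x)\cup J}\bigl(\langle A_i, y \rangle - b_i\bigr)^2,
\]
so $f$ is twice continuously differentiable at $y_n$ with $\nabla^2 f(y_n) = A^T \diag(d^J) A$. Passing to the limit $n \to \infty$ gives $A^T \diag(d^J) A \in \partial^2 f(x)$ by the definition of the generalised Hessian.

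The only real obstacle is the construction of $v$. Its existence is equivalent to solvability of the linear system $M v = \varepsilon$ for every sign vector $\varepsilon \in \{-1,+1\}^{|I_0(x)|}$, where $M$ is the matrix whose rows are $A_i^T$, $i \in I_0(x)$. The hypotheses $|I_0(x)| \le n$ and linear independence of $\{A_i\}_{i \in I_0(x)}$ are precisely what guarantees that $M$ has full row rank, so this system is consistent for every right-hand side. The example preceding the proposition demonstrates that, without linear independence, some sign pattern on $I_0(x)$ becomes unrealisable and the corresponding vertex of $A^T \diag((Ax-b)_*) A$ may fail to belong to $\partial^2 f(x)$, which shows that this assumption is genuinely essential.
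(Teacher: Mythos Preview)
Your proof is correct and follows essentially the same approach as the paper's: reduce to showing that each extreme point $A^T\diag(d^J)A$ lies in $\partial^2 f(x)$, use the linear independence of $\{A_i\}_{i\in I_0(x)}$ to find a direction $v$ realising the prescribed signs on $I_0(x)$, and pass to the limit along $x+t_n v$. The paper indexes the vertices by vectors $v\in\{0,1\}^m$ rather than by subsets $J\subseteq I_0(x)$, but this is only a notational difference.
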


\begin{proof}
If $I_0(x) = \emptyset$, then the claim of the proposition can be readily verified directly (see the proof of
Proposition~\ref{prp:EvtushenkoFormula}). Therefore, suppose that $I_0(x) \ne \emptyset$.

Let $V \subseteq \{ 0, 1 \}^m$ be the set of all those vectors $v$ for which $v_i = 1$ for any $i \in I_+(x)$ and 
$v_i = 0$ for any $i \in I_-(x)$. Note that $V$ is the set of all extreme points of the set $(Ax - b)_*$. Let us show
that $A^T \diag(v) A \in \partial^2 f(x)$ for any $v \in V$. Then thanks to the convexity of the generalised Hessian and
the fact that $(Ax - b)_* = \co V$ one can conclude that
\[
  A^T \diag((Ax - b)_*) A = \co\Big\{ A^T \diag(v) A \Bigm| v \in V \Big\} \subseteq \partial^2 f(x),
\]
which along with Lemma~\ref{lem:MangasarianIncl} implies the required result.

Fix any $v \in V$. From the fact that the vectors $A_i$, $i \in I_0(x)$, are linearly independent it follows that there
exists $y \in \mathbb{R}^n$ such that
\[
  \langle A_i, y \rangle = \begin{cases}
    1, & \text{if } v_i = 1,
    \\
    -1, & \text{if } v_i = 0
  \end{cases}
  \quad \forall i \in I_0(x).
\]
Choose any sequence $\{ t_n \} \subset (0, + \infty)$ converging to zero and denote $x_n = x + t_n y$. Then there exists
$n_0 \in \mathbb{N}$ such that for any $n \ge n_0$ one has
\[
  \langle A_i, x_n \rangle = \langle A_i, x \rangle + t_n \langle A_i, y \rangle  \begin{cases}
    > b_i, & \text{if } i \in I_+(x) \text{ or } (i \in I_0(x) \text{ and } v_i = 1),
    \\
    < b_i, & \text{if } i \in I_-(x) \text{ or } (i \in I_0(x) \text{ and } v_i = 0).
  \end{cases}
\]
Clearly, for any such $n$ there exists a neighbourhood $\mathcal{U}(x_n)$ of $x_n$ such that
\[
  f(y) = \sum_{i \in I_+(x) \cup \{ i \in I_0(x) \colon v_i = 1 \}} (\langle A_i, y \rangle - b_i)^2 
  \quad \forall y \in \mathcal{U}(x_n).
\]
Therefore, for any $n \ge n_0$ the function $f$ is twice continuously differentiable at $x_n$ and, as is easily seen,
$\nabla^2 f(x_n) = A^T \diag(v) A$. Passing to the limit as $n \to \infty$ one gets that 
$A^T \diag(v) A \in \partial^2 f(x)$, which completes the proof.
\end{proof}

\section*{Acknowledgements}

The author is sincerely grateful to professor V.N. Malozemov for drawing the author's attention to paper
\cite{Mangasarian2004}, during a careful examination of which the author made the observations presented in this note.

\bibliographystyle{abbrv}  
\bibliography{Dolgopolik_bibl}

\end{document}